\newcommand{%
    \import{./figures/}{.pdf_tex}
}[1]{%
    \import{./figures/}{#1.pdf_tex}
}
\newtheorem{theorem}{Theorem}
\newtheorem{lemma}[theorem]{Lemma}
\theoremstyle{definition}
\newtheorem{definition}[theorem]{Definition}
\newtheorem{conjecture}[theorem]{Conjecture}
\theoremstyle{remark}
\numberwithin{equation}{section}
\begin{document}

\title{{Simple graphs of order 12 and minimum degree 6 contain $K_6$ minors}}

\date{\today}
\author{
Ryan Odeneal and
Andrei Pavelescu
}

\address{
Department of Mathematics, University of South Alabama, Mobile, AL  36688, USA.
}

\maketitle
\rhead{12verticesmindeg6K6}

\begin{abstract}
We prove that every simple graph of order 12 which has minimum degree 6 contains a $K_6$ minor.
\end{abstract}
\vspace{0.1in}

\section{Introduction}

All the graphs considered in this article are simple (non-oriented, without loops or multiple edges). For a graph $G$, \textit{a minor of G} is any graph that can be obtained from $G$ by a sequence of vertex deletions, edge deletions, and simple edge contractions.
A simple edge contraction means identifying its endpoints, deleting that edge,  and deleting any double edges thus created. A graph $G$ is called \textit{apex} if it has a vertex $v$ such that $G-v$ is planar, where $G-v$ is the subgraph of $G$ obtained by deleting vertex $v$ and all edges of $G$ incident to $v$. In \cite{Jo}, J{\o}rgensen stated the following conjecture:

\begin{conjecture} Let $G$ be 6-connected graph which does not have a $K_6$ minor. Then $G$ is apex. 
\label{Joergensen}
\end{conjecture}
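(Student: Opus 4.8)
The plan is to attack the conjecture through the Robertson--Seymour structure theorem for graphs excluding a fixed minor, specialized to $H = K_6$, using the $6$-connectivity hypothesis to strip the tree-decomposition down to a single almost-embeddable piece and then pushing that almost-embeddable description all the way to the apex conclusion. First I would invoke the structure theorem: because $G$ has no $K_6$ minor, $G$ admits a tree-decomposition whose torsos are all $k$-almost-embeddable in some fixed surface $\Sigma$ of bounded Euler genus, where $k$ is an absolute constant. That is, each torso has a set of at most $k$ apex vertices so that the remainder embeds in $\Sigma$ outside a bounded number of disjoint disks, into each of which a vortex of bounded width is glued.

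Second, I would use that $G$ is $6$-connected to collapse the decomposition. Every adhesion set is a separator of $G$, so whenever it genuinely separates it must have order at least $6$; a short argument then shows the tree-decomposition can be taken to have a single bag, whence $G$ itself is $k$-almost-embeddable. The degenerate cases, where a bag has order close to the connectivity, would have to be checked directly.

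Third --- and here lies essentially all of the difficulty --- I would reduce the $k$-almost-embeddable structure to the exact statement that $G$ is apex, i.e.\ that there is a single vertex $v$ with $G - v$ planar. Concretely one must show that (i) the surface $\Sigma$ is forced to be the sphere, (ii) there are no vortices, and (iii) a single apex vertex suffices. Each of these I would prove contrapositively: under $6$-connectivity, positive genus, a nontrivial vortex, or the need for two apices each lets one exhibit six pairwise-adjacent connected branch sets, producing a forbidden $K_6$ minor.

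The main obstacle is this third step. The structure theorem guarantees only \emph{boundedly many} apices, a \emph{bounded-genus} surface, and \emph{bounded-width} vortices; it supplies no mechanism for shrinking these parameters to the sharp values $1$, $0$, and $0$. Turning ``bounded'' into the precise apex structure is exactly the content of J{\o}rgensen's conjecture and is not delivered by the structure theorem alone --- it demands a delicate extremal analysis of how excess genus, a vortex, or an extra apex each forces a $K_6$ minor in a highly connected graph. I expect this to be where the order-$12$, minimum-degree-$6$ result of the present paper enters: as a base case or local certificate guaranteeing a $K_6$ minor in the small, dense configurations that such an analysis repeatedly encounters.
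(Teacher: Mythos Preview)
The paper does not prove this statement: it is J{\o}rgensen's Conjecture, stated as an open problem. The paper's actual contribution is Theorem~\ref{main}, which shows that every simple graph of order $12$ with minimum degree at least $6$ has a $K_6$ minor; this implies the conjecture holds \emph{vacuously} for graphs of order $12$ (no such $6$-connected $K_6$-minor-free graph exists at that order). So there is no ``paper's own proof'' to compare against, and your speculation that the order-$12$ result feeds into a global structure-theoretic argument as a base case is not what the paper does --- the paper simply proves the order-$12$ case by a direct, self-contained case analysis on the edge count ($36$, $37$, or $38$) and local neighbourhood structure, with no appeal to the graph-minor structure theorem at all.

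As for the proposal itself, it is a plan rather than a proof, and you have correctly identified where it fails: your third step is the entire content of the conjecture. The Robertson--Seymour structure theorem together with $6$-connectivity does indeed let one reduce to a single almost-embeddable piece, and this is precisely the route taken by Kawarabayashi, Norine, Thomas, and Wollan in \cite{KNTW} --- but they obtain the conjecture only for \emph{sufficiently large} graphs, and their arguments for forcing genus $0$, no vortices, and a single apex rely essentially on having many vertices available to build the six branch sets. You give no argument for (i)--(iii) beyond ``prove it contrapositively''; producing those six branch sets from, say, a single crosscap or a depth-$2$ vortex in a graph that might have only a few dozen vertices is exactly the unresolved difficulty. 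Until that step is supplied, the proposal is an outline of the known partial approach, not a proof of the conjecture.
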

This result relates to Hadwiger's Conjecture \cite{Hugo}, which states:

\begin{conjecture} For every integer $t \ge 1$, if a loopless graph $G$ has no $K_t$ minor, then it is $(t-1)$ colorable.
\label{Hadwiger}
\end{conjecture}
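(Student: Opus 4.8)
The plan is to argue by induction on $t$ via a vertex-minimal counterexample. The statement is trivial for $t \le 3$ (a graph with no $K_3$ minor is a forest, hence $2$-colorable), and $t=4$ follows from the fact that $K_4$-minor-free graphs are series--parallel and therefore $3$-degenerate. So I would fix $t \ge 5$, assume the result for all smaller parameters, and suppose for contradiction that $G$ is a loopless graph with no $K_t$ minor but with chromatic number at least $t$, chosen with the fewest vertices. Elementary reductions pin down the structure of $G$: it is connected; every vertex has degree at least $t-1$, since a vertex of smaller degree could be deleted, $(t-1)$-colored by minimality, and reinserted greedily; and $G$ has no clique cutset, because splitting $G$ along a clique $S$ would yield smaller $K_t$-minor-free graphs, each $(t-1)$-colorable by minimality, whose colorings may be permuted to agree on $S$ and glued. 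A Mader-type argument then forces $G$ to be highly connected.

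The heart of the argument is structural, and I would first dispose of the small cases by hand. For $t=5$, Wagner's theorem describes $K_5$-minor-free graphs as clique-sums of planar graphs and one exceptional piece, so $5$-colorability reduces to the Four Color Theorem. For $t=6$, one shows via an apex structure theorem --- closely tied to J{\o}rgensen's Conjecture \ref{Joergensen} --- that a minimal counterexample becomes planar after deleting a single vertex, and is therefore $5$-colorable, again by the Four Color Theorem. This is precisely where the paper's main result fits: the connectivity hypothesis in Conjecture \ref{Joergensen} must ultimately be discharged by controlling small dense cases, such as graphs of order $12$ with minimum degree $6$, by showing they already contain a $K_6$ minor.

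For general $t \ge 7$ I would appeal to the Robertson--Seymour structure theorem: every $K_t$-minor-free graph is built by clique-sums of bounded order from pieces that are \emph{almost embeddable} in a surface of bounded genus, i.e.\ embeddable after deleting a bounded number of apex vertices and adding a bounded number of bounded-width vortices. Since $G$ has no clique cutset, it would itself be one such almost-embeddable piece, and the problem reduces to $(t-1)$-coloring a near-surface graph carrying few apices and vortices.

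The hard part, and the reason this statement is open for every $t \ge 7$, lies in this final reduction. The best known extremal bound says a $K_t$-minor-free graph has average degree $O(t\sqrt{\log t})$ (Kostochka and Thomason), which only yields $O(t\sqrt{\log t})$-colorability --- off from the target $t-1$ by an unbounded factor. Each apex vertex costs a color, the vortices resist any clean planar-coloring scheme, and no known mechanism converts ``bounded genus plus a bounded number of apices'' into the sharp bound $t-1$ rather than a bound growing with the apex count. Closing this gap, converting qualitative structure into the exact chromatic number, is the essential obstacle, and I would not expect the minimal-counterexample and structure-theorem machinery to suffice without a genuinely new idea for bounding the chromatic number of almost-embeddable graphs.
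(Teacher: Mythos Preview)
The statement you are addressing is labeled \emph{Conjecture} in the paper, not \emph{Theorem}, and the paper offers no proof of it whatsoever: it merely records that the cases $t\le 6$ are known (citing Appel--Haken for $t=5$ and Robertson--Seymour--Thomas for $t=6$) and explains how J{\o}rgensen's Conjecture would yield an alternative route to the $t=6$ case. There is therefore no ``paper's own proof'' to compare your proposal against.

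Your write-up is not a proof either, and you say so yourself in the final paragraph: the argument stalls at $t\ge 7$ because the Robertson--Seymour structure theorem combined with the Kostochka--Thomason extremal bound only gives $O(t\sqrt{\log t})$ colors, not $t-1$. That is an accurate assessment of the state of the art, but it means what you have submitted is a survey of partial results and obstructions rather than a proof proposal. A couple of smaller points within that survey: $K_4$-minor-free graphs are $2$-degenerate, not $3$-degenerate (every such graph has a vertex of degree at most $2$), though your conclusion of $3$-colorability is unaffected; and for $t=6$ the Robertson--Seymour--Thomas proof does not go through J{\o}rgensen's Conjecture, which remains open in general --- the paper is careful to say only that J{\o}rgensen together with Mader's $6$-connectivity result \emph{would} give another proof. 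In short, the genuine gap is not a subtle misstep but the fact that Hadwiger's Conjecture is open, and no proof can presently be written.
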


Conjecture \ref{Hadwiger} is known to be true for $t\le 6$. For $t=4$, the Conjecture is equivalent to Appel and Haken's 4-Color Theorem \cite{4color}. For $t=6$, Robertson, Seymour, and Thomas \cite{RST} proved it using a result of Mader. In 1968, Mader \cite{Mader2} proved that a minimal counterexample to Conjecture \ref{Hadwiger} for $t=6$ has to be 6-connected. Together with J{\o}rgensen's Conjecture, it would provide another proof that Conjecture \ref{Hadwiger} holds for $t=6$, along with more information about the structure of graphs with no $K_6$ minors.

J{\o}rgensen himself took steps towards proving Conjecture \ref{Joergensen}. In \cite{Jo}, J{\o}rgensen proved that every graph $G$ with at most 11 vertices and minimal degree $\delta(G)$ at least 6 is contractible to a $K_6$. In his proof, he used the following result Mader \cite{Mader1} proved in 1968:

\begin{theorem} Every simple graph with minimal degree at least 5 either has a minor isomorphic to $K_6^-$ or it has a minor isomorphic to the Icosahedral graph. 
\label{deg5}
\end{theorem}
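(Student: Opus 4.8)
The plan is to derive the theorem from the assertion that \emph{the only minor-minimal simple graphs of minimum degree at least $5$ are $K_6$ and the icosahedron}. Granting this, any graph $G$ with $\delta(G)\ge 5$ contains, among its minors of minimum degree $\ge 5$ (of which $G$ itself is one), a minor-minimal such graph; that graph is $K_6$ or the icosahedron, and since $K_6$ has $K_6^-$ as a subgraph, $G$ has a $K_6^-$ minor or an icosahedral minor in either case. So assume from now on that $G$ is minor-minimal subject to $\delta(G)\ge 5$ and that $G$ is neither $K_6$ nor the icosahedron; we seek a contradiction.

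Two consequences of minor-minimality drive the argument. Deleting any edge of $G$ must lower the minimum degree, so every edge has an endpoint of degree exactly $5$; thus $\delta(G)=5$ and the vertices of degree $\ge 6$ form an independent set. Contracting any edge $uv$ must also lower the minimum degree; since high-degree vertices are independent, a short calculation shows the contracted vertex cannot be the offending one unless $u$ and $v$ both have degree $5$ and the same closed neighbourhood, so outside that rigid ``twin'' configuration every edge $uv$ has a common neighbour of degree $5$ --- in particular, every edge lies in a triangle. The crux is to turn this into a local structure theorem: at a vertex $v$ of degree $5$ with $N(v)=\{v_1,\dots,v_5\}$, the link $G[N(v)]$ has no isolated vertex, and a case analysis over the possible links shows that unless $G[N(v)]$ is an induced $5$-cycle one can either pass to a proper minor of $G$ still having minimum degree $\ge 5$, contradicting minor-minimality, or read off a $K_6^-$ minor directly from $N(v)$ and a few nearby vertices --- for instance, when $G[N(v)]$ is complete, $\{v\}\cup N(v)$ is a $K_6$ and one shows $G=K_6$. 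A similar analysis rules out vertices of degree $\ge 6$.

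The upshot is that $G=K_6$ or else $G[N(v)]$ is an induced $5$-cycle at every vertex, so that $G$ is a $5$-regular triangulation of a closed surface (the five triangles at each vertex forming a pentagonal disc, so every edge lies in exactly two triangular faces). In the latter case $G$ has $V:=|V(G)|$ vertices, $E=\tfrac{5}{2}V$ edges and $F=\tfrac{5}{3}V$ faces, so the surface has Euler characteristic $V-\tfrac{5}{2}V+\tfrac{5}{3}V=\tfrac{1}{6}V>0$ and is therefore the sphere (whence $V=12$) or the projective plane (whence $V=6$); the projective-plane case is impossible, since the only $5$-regular graph on $6$ vertices is $K_6$, whose vertex-links are $K_5$. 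Hence $G$ is a $5$-regular triangulation of the sphere, that is, the icosahedron. Either conclusion, $G=K_6$ or $G=$ icosahedron, contradicts our assumption and proves the claim, and with it the theorem.

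The step I expect to be genuinely hard is the local structure theorem. The obstruction is that, with $\delta(G)=5$ and every edge in a triangle, a single contraction almost always pushes a common neighbour down to degree $4$, so bad configurations cannot simply be contracted away; one must design, case by case over the possible link graphs at a degree-$5$ vertex and over the ways a high-degree vertex can attach, a tailored sequence of deletions and contractions lowering the vertex count while keeping minimum degree $5$, and in the residual configurations exhibit the $K_6^-$ minor by hand. (The fact that a dense $5$-regular graph can conceal a $K_6$ minor only through a non-obvious contraction --- as for the complement of the $8$-cycle --- is symptomatic of the difficulty.) This casework, together with the routine but tedious job of ruling out small vertex cuts, is where essentially all the work sits; by contrast the closing Euler-characteristic computation is short and is exactly what forces the answer to be the icosahedron (or $K_6$).
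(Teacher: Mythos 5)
First, a point of reference: the paper does not prove Theorem \ref{deg5} at all --- it is Mader's theorem, quoted from \cite{Mader1} --- so there is no in-paper proof to measure you against; your proposal has to stand on its own, and it does not. The assertion you hang everything on, that $K_6$ and the icosahedron are the only minor-minimal simple graphs of minimum degree at least $5$, is false. Take $K_{1,2,2,2}$, the cone over the octahedron: it has $7$ vertices and $\delta=5$; it has no $K_6$ minor (a $K_6$ minor of a $7$-vertex graph must come from deleting one vertex or contracting one edge, and here every such operation leaves a nonadjacent pair), and of course no icosahedral minor; and it is minor-minimal with $\delta\ge 5$, since every edge is incident to a degree-$5$ vertex and any proper minor on fewer vertices with $\delta\ge5$ would have to be $K_6$. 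So the strengthened conclusion your classification would yield (``$K_6$ minor or icosahedron minor'') is genuinely stronger than Mader's statement and is wrong: $K_6^-$ cannot be upgraded to $K_6$. The same example shows where your sketched local analysis breaks: in $K_{1,2,2,2}$ the link of a degree-$5$ vertex is a wheel, not an induced $5$-cycle, and the graph is neither $K_6$ nor the icosahedron. Moreover, within your own framework the outcome ``read off a $K_6^-$ minor'' is not a contradiction to minor-minimality with respect to $\delta\ge5$, so the intended contradiction can never be reached along that branch. The natural repair is to argue on a minor-minimal \emph{counterexample to the theorem} (so you may additionally assume $G$ has no $K_6^-$ minor and no icosahedral minor); then $K_{1,2,2,2}$ is no longer in play, because it does have a $K_6^-$ minor, and ``exhibit a $K_6^-$ minor'' becomes a legitimate contradiction.

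Second, even after that restructuring, what you have is a plan rather than a proof. The ``local structure theorem'' --- that at every vertex the link is an induced $5$-cycle unless one can pass to a smaller configuration or exhibit a $K_6^-$ minor, together with the elimination of vertices of degree at least $6$ and of small cuts --- is exactly where the entire difficulty of Mader's theorem lives, and you explicitly leave it undone. The endgame you do carry out is fine: if every link is an induced $5$-cycle, each edge lies in exactly two triangles, the triangles form a closed surface, $\chi=V-\tfrac52 V+\tfrac53 V=\tfrac{V}{6}>0$ forces the sphere with $V=12$ (the projective-plane case dying on $K_6$), and a $5$-regular planar triangulation is the icosahedron. But that is the short, easy tail of the argument; as submitted, the proposal is not a proof of the theorem.
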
 
The icosahedral graph is the only 5-regular planar graph on 12 vertices.
 Mader also proved the following theorem \cite{Mader1}. 

\begin{theorem} For every integer $2\le t \le 7$ and every simple graph $G$ of order $n\ge t-1$ which has no minor isomorphic to $K_t$, $G$ has at most $(t-2)n-$${t-1}\choose {2}$ edges.
\label{Mader}
\end{theorem}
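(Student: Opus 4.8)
The plan is to argue by induction, treating $t$ as an outer parameter and the number of vertices $n$ as the inner induction variable, and deriving a contradiction from a minimal counterexample. The cases $t\in\{2,3\}$ are immediate: a graph with no $K_2$ minor is edgeless, and a graph with no $K_3$ minor is a forest and so has at most $n-1=1\cdot n-\binom{2}{2}$ edges. For each $t$ the two smallest orders $n=t-1$ and $n=t$ are also immediate, since a graph on at most $t$ vertices has a $K_t$ minor if and only if it is $K_t$ itself, and $\binom{t-1}{2}=(t-2)(t-1)-\binom{t-1}{2}$ while $\binom{t}{2}-1=(t-2)t-\binom{t-1}{2}$. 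So fix $4\le t\le 7$ and let $G$ be a counterexample with $n\ge t+1$ vertices, no $K_t$ minor, and $e(G)\ge(t-2)n-\binom{t-1}{2}+1$, chosen with $n$ minimum for this $t$.

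I would first record three reductions, each of which produces a strictly smaller counterexample and is therefore forbidden. (1) If some vertex $v$ has degree at most $t-2$, then $G-v$ has no $K_t$ minor and $e(G-v)\ge e(G)-(t-2)\ge(t-2)(n-1)-\binom{t-1}{2}+1$; hence $\delta(G)\ge t-1$. (2) If some edge $uv$ satisfies $|N(u)\cap N(v)|\le t-3$, then $G/uv$ has no $K_t$ minor and $e(G/uv)=e(G)-1-|N(u)\cap N(v)|\ge(t-2)(n-1)-\binom{t-1}{2}+1$; hence every edge of $G$ lies in at least $t-2$ triangles. (3) If $G=G_1\cup G_2$ with $H:=G_1\cap G_2$ a clique and both $G_i$ proper, then $|H|\le t-1$ (else $G\supseteq K_t$); using the standard fact that $G$ has a $K_t$ minor iff $G_1$ or $G_2$ does, together with $e(G)=e(G_1)+e(G_2)-\binom{|H|}{2}$, $n=|G_1|+|G_2|-|H|$, the elementary inequality $(t-2)s-\binom{s}{2}\le\binom{t-1}{2}$ for $0\le s\le t-1$, and a direct check of pieces on fewer than $t-1$ vertices, one of $G_1,G_2$ is a smaller counterexample; hence $G$ has no clique cutset.

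The core is the analysis of the link graphs. For each vertex $v$ put $H_v=G[N(v)]$. Then $H_v$ has no $K_{t-1}$ minor (else $H_v$ with $v$ gives a $K_t$ minor of $G$), it has $\deg(v)\ge t-1$ vertices, and by (1)--(2) every vertex of $H_v$ has degree at least $t-2$ inside $H_v$. Applying the outer induction hypothesis with parameter $t-1$ gives $e(H_v)\le(t-3)\deg(v)-\binom{t-2}{2}$. For $t=4$ this already contradicts $\delta(H_v)\ge 2$, since a forest has a vertex of degree at most $1$; for $t=5$ it contradicts $\delta(H_v)\ge 3$, since a $K_4$-minor-free graph is series-parallel, hence $2$-degenerate. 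This settles $t\le 5$.

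The cases $t=6$ and $t=7$ are where I expect the real work, and they are the main obstacle: now $H_v$ is a $K_5$-minor-free graph with $\delta(H_v)\ge 4$, respectively a $K_6$-minor-free graph with $\delta(H_v)\ge 5$, and the crude edge count is no longer contradictory. The plan here is to invoke the structure available at this level -- Wagner's description of $K_5$-minor-free graphs as clique sums of planar graphs and copies of the Wagner graph $V_8$ for $t=6$, and the structure of $K_6$-minor-free graphs of large minimum degree for $t=7$ -- to pin $H_v$ down as essentially an extremal $K_{t-1}$-minor-free graph, and then finish in one of two ways: either build a $K_t$ minor in $G$ directly, using that $v$ has minimum degree so the attachment of $N(v)$ to $G-v$ is rich enough to complete the branch sets, or convert the local edge surplus around a minimum-degree vertex into a global violation of $e(G)\le(t-2)n-\binom{t-1}{2}$. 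The $t=7$ case analysis is the most involved, and the fact that it only barely closes is mirrored by the failure of the statement for every $t\ge 8$ (already $K_{2,2,2,2,2}$ has no $K_8$ minor yet has $40>6\cdot 10-21$ edges).
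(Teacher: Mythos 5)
This statement is Mader's theorem, which the paper does not prove but simply cites from \cite{Mader1}, so the comparison here is against Mader's known argument rather than anything in the text. Your reductions (minimum degree at least $t-1$, every edge in at least $t-2$ triangles, no clique cutset) and your base cases are correct, and the link-graph argument does close the cases $t\le 5$: a $K_3$-minor-free or $K_4$-minor-free neighbourhood is $1$- resp.\ $2$-degenerate, contradicting $\delta(H_v)\ge t-2$. But for $t=6$ and $t=7$ — which are precisely the content of Mader's theorem, and the only part this paper actually uses (the bound $4n-10$ for $K_6$) — you have a plan, not a proof. A $K_5$-minor-free graph can have minimum degree $4$ or $5$ (any planar triangulation of minimum degree $5$, e.g.\ the icosahedron), and a $K_6$-minor-free graph can have minimum degree $5$ (the icosahedron again), so at this point neither an edge count nor degeneracy gives a contradiction, and the step ``pin $H_v$ down as essentially extremal and then either build a $K_t$ minor in $G$ or convert the local surplus into a global violation'' is exactly the hard combinatorial work; nothing in your outline shows how the branch sets of a $K_6$ or $K_7$ minor are actually completed through $G-N[v]$, nor how a local surplus propagates to a global edge bound.

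Moreover, the tool you propose for $t=7$ — ``the structure of $K_6$-minor-free graphs of large minimum degree'' — is not available: no usable structure theorem of this kind exists (the question of what $6$-connected $K_6$-minor-free graphs look like is J{\o}rgensen's conjecture, the open problem motivating this very paper), so that branch of the plan is circular or at best unsupported. Mader's actual proof of the cases $t=6,7$ is a delicate induction that does not pass through such structure results, and even the $t=6$ case via Wagner's decomposition of $K_5$-minor-free graphs into clique sums of planar graphs and $V_8$ requires a substantial analysis of how the planar pieces attach inside $G$. As it stands, the proposal proves the theorem only for $t\le 5$ and leaves the cases that matter here unproven.
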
 

Note that for $t=6$, the theorem implies that every graph $G$ of order $n$ and size $4n-9$ or more has a $K_6$ minor. 

In \cite{Jor}, J{\o}rgensen classified the graphs of order $n$ and size $4n-10$.

\begin{theorem} Let $p$ be a natural number, $5\le p \le 7$. Let G be a graph with
$n$ vertices and $(p-2)n$-${p}\choose {2}$ edges that is not contractible to $K_p$. Then
either G is an $MP_{p-5}$-cockade or $p =7$ and $G$ is the complete 4-partite
graph $K_{2,2,2,3}$.
\label{cockade}
\end{theorem}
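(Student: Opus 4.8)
Since $G$ has the maximum size permitted by Theorem~\ref{Mader} for a graph with no $K_p$ minor, $G$ is \emph{edge-maximal} with respect to having no $K_p$ minor: for every non-adjacent pair $u,v$, the graph $G+uv$ has a $K_p$ minor, equivalently $G$ itself has a minor isomorphic to $K_p$ with one edge removed, the two branch sets of that edge containing $u$ and $v$. The plan is to induct on $n=|V(G)|$, refining the induction that proves Theorem~\ref{Mader} so as to keep track of the extremal graphs; the induction will also descend in $p$. In the base case $n=p-1$ the size equation forces $e(G)=\binom{p-1}{2}$, so $G=K_{p-1}=K_{p-5}+K_4$, a one-atom $MP_{p-5}$-cockade.

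For $n\ge p$ I would first use the connectivity lemma underlying Theorem~\ref{Mader}: in an extremal $K_p$-minor-free graph every vertex cut of size at most $p-2$ is a clique of size exactly $p-2$. (A non-clique cut of size $\le p-2$ is excluded by Mader's rerouting argument — one ``fills in'' a non-edge inside the cut and then pushes the resulting $K_p$ minor of $G+uv$ back into $G$; a clique cut $S$ with $|S|<p-2$ is excluded by an edge count, since splitting $G=G_1\oplus_S G_2$ gives $e(G)=e(G_1)+e(G_2)-\binom{|S|}{2}$ and $n=n_1+n_2-|S|$, and the Mader bounds $e(G_i)\le(p-2)n_i-\binom{p-1}{2}$ then leave a strict edge deficit unless $|S|=p-2$.) Hence $G$ is $(p-1)$-connected, or else it has a cut $K_{p-2}$ and splits as a clique-sum $G=G_1\oplus_{K_{p-2}}G_2$ in which, by the same edge-count equality, \emph{both} $G_i$ are extremal $K_p$-minor-free on fewer vertices. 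In the latter case the induction hypothesis makes each $G_i$ an $MP_{p-5}$-cockade or $K_{2,2,2,3}$; but $K_{2,2,2,3}$ has clique number $4$, hence contains no $K_{p-2}$ when $p=7$, so it cannot be a summand; and since a clique-sum of $MP_{p-5}$-cockades along a $K_{p-2}$ is again one, so is $G$.

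We are left with the case that $G$ is $(p-1)$-connected, which is the crux. The key claim is that $G$ has a \emph{universal} vertex $w$ — unless $p=7$, where one further $6$-connected graph of the correct size, with no universal vertex, also occurs. Granting the claim the rest is short: $G-w$ has no $K_{p-1}$ minor (such a minor plus $w$ would be a $K_p$ minor of $G$), and $e(G-w)=e(G)-(n-1)$ works out to be exactly the extremal size of a $K_{p-1}$-minor-free graph on $n-1$ vertices, so by the induction on $p$ the graph $G-w$ is an $MP_{p-6}$-cockade (for $p=5$ this is instead the classical fact, from Wagner's theorem together with the $3n-6$ edge count, that an edge-maximal $K_5$-minor-free graph of maximal size is a maximal planar graph); adding back the universal vertex $w$ turns this into an $MP_{p-5}$-cockade, so $G$ is one. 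The work is in the key claim. Here one takes a vertex $v$ of minimum degree — so $p-1\le\deg(v)\le 2p-5$, because the average degree is just under $2(p-2)$, a window of at most four values — and uses edge-maximality to see that the non-complete set $N(v)$ has every non-edge ``realisable'' through a $K_p$-minus-an-edge minor of $G$; turning this into a universal vertex is the heart of the proof, and for $p=7$ the analysis has a second branch that a finite case check identifies as $K_{2,2,2,3}$.

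I expect this last step — locating the universal vertex and pinning down the $p=7$ exception — to be the main obstacle. The clique-sum reduction is essentially bookkeeping once the connectivity lemma is available, but ruling out every sporadic $(p-1)$-connected $K_p$-minor-free graph of extremal size is delicate; it is exactly what produces $K_{2,2,2,3}$ at $p=7$, and it is where one appeals to the deeper structural content of Mader's proof of Theorem~\ref{Mader} or carries out a self-contained but fairly long case analysis. For $p=6$ it is precisely the size-constrained instance of J{\o}rgensen's Conjecture~\ref{Joergensen} asserting that a $5$-connected graph with no $K_6$ minor is apex.
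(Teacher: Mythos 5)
This statement is not proved in the paper at all: it is J{\o}rgensen's classification theorem, quoted from \cite{Jor} (note also that the intended edge count is the Mader-extremal value $(p-2)n-\binom{p-1}{2}$, as you correctly assumed and as the paper's own use for $p=6$, size $4n-10$, confirms). So your proposal can only be judged on its own merits, and as it stands it has a genuine gap. The outer frame is fine and standard: edge-maximality, the fact that any separator of size at most $p-2$ must be a clique of size exactly $p-2$ (your clique-sum edge count $e(G)=e(G_1)+e(G_2)-\binom{|S|}{2}$ together with Mader's bound on each side does force $|S|=p-2$ with both sides extremal), and the resulting induction over clique-sums. But the entire substance of the theorem is your ``key claim'' for the $(p-1)$-connected case, and you do not prove it; you explicitly defer it to ``the deeper structural content of Mader's proof'' or ``a fairly long case analysis.'' For $p=6$ that missing step is precisely the assertion that a $5$-connected graph with $4n-10$ edges and no $K_6$ minor is a cone over a $4$-connected maximal planar graph --- this is the main body of J{\o}rgensen's argument, and it is not supplied by observing that it is ``the size-constrained instance'' of Conjecture~\ref{Joergensen}: first, that conjecture is exactly what one is not allowed to assume here, and second, even granted, apexness alone does not give the cone-over-a-$4$-connected-triangulation structure needed to conclude $G$ is an $MP_1$-cockade (for that you need Lemma~\ref{cone}-type counting plus planarity structure). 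Likewise the $p=7$ branch that is supposed to produce $K_{2,2,2,3}$ is only announced, not carried out.

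Two further points to repair even in the outline. As literally stated, the universal-vertex claim is false for $p=5$: a $4$-connected maximal planar graph (the icosahedron, say) is extremal and $K_5$-minor-free but has no universal vertex, so the base of your descent in $p$ must be formulated via Wagner's theorem from the start rather than as an exception buried in a parenthesis. And even granting a universal vertex $w$, the final step needs a check against the recursive definition: $G-w$ being an $MP_{p-6}$-cockade (a clique-sum of cones/triangulations over $K_{p-3}$) must be shown to yield that $w\ast(G-w)$ is an $MP_{p-5}$-cockade, i.e., that coning commutes with the clique-sum decomposition; this is true but is part of the bookkeeping you should make explicit. In short: correct skeleton, but the heart of the theorem --- ruling out all $(p-1)$-connected extremal graphs other than cones and $K_{2,2,2,3}$ --- is assumed rather than proved.
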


For $p=6$, this theorem shows that any graph $G$ of order $n$ and size $4n-10$ either contains a $K_6$ minor, or it is a $MP_1$-cockade. The following is J{\o}rgensen's definition of an $MP_1$-cockade.

\begin{definition}
$MP_{1}$-cockades are defined recursively as follows:
  \begin{enumerate}
    \item $K_{5}$ is an $MP_1$-cockade and if $H$ is a 4-connected maximal 
          planar graph then $H*K_1$ is an $MP_1$-cockade. 
    \item Let $G_1$ and $G_2$ be disjoint $MP_1$-cockades, and let 
          $x_1,x_2, x_3$, and $x_{4}$ be the vertices of a $K_4$ subgraph of $G_1$ and let 
          $y_1,y_2, y_3$, and $y_{4}$ be the vertices of a $K_{4}$ subgraph of $G_2.$ Then
          the graph obtained from $G_1 \cup G_2$ by identifying $x_j$ and 
          $y_j,$ for $j=1,2,3,4$, is an $MP_{1}$-cockade. 
  \end{enumerate}
\end{definition}

For two graphs $G_1$ and $G_2$, $G_1\ast G_2$ denotes the graph with vertex set $V(G_1)\sqcup V(G_2)$ and edge set $E(G_1)\sqcup E(G_2)\sqcup E'$, where $E'$ is the set of edges with one endpoint in $V(G_1)$ and the other endpoint in $V(G_2)$. In $G_1\ast v$, we call $v$ \textit{a cone} over $G_1$. A graph $G$ is the \textit{clique sum} of $G_1$ and $G_2$ over $K_p$ if $V(G)=V(G_1)\cup V(G_2)$, $E(G)=E(G_1)\cup E(G_2)$ and the subgraphs induced by $V(G_1)\cap V(G_2)$ in both $G_1$ and $G_2$ are complete of order $p$. In this context, an $MP_1$-cockade is either a cone over a 4-connected maximal planar graph or the clique sum over $K_4$ of two smaller $MP_1$-cockades. 

In \cite{KNTW}, Kawarabayashi,  Norine, Thomas, and Wollan proved that Conjecture \ref{Joergensen} holds for sufficiently large graphs. Little is known about the validity of Conjecture \ref{Joergensen} for small order graphs. In this paper, we prove that J{\o}rgensen's Conjecture holds for graphs of order 12 in a more general setting.

\begin{theorem} Let $G$ be a simple graph of order $12$ and assume that $\delta(G)\ge 6$, where $\delta(G)$ denotes the minimal degree of $G$. Then $G$ contains a $K_6$ minor.
\label{main}
\end{theorem}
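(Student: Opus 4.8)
The plan is to organize the argument by the number of edges of $G$. Since $\delta(G)\ge 6$ we have $|E(G)|\ge 36$, and if $\delta(G)\ge 7$ then $|E(G)|\ge 42>4\cdot 12-9$, so Theorem~\ref{Mader} (with $t=6$) already yields a $K_6$ minor. Hence we may assume $G$ has a vertex of degree exactly $6$ and, again by Theorem~\ref{Mader}, that $|E(G)|\le 4\cdot 12-10=38$; thus $|E(G)|\in\{36,37,38\}$.

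When $|E(G)|=38$ we invoke J{\o}rgensen's classification, Theorem~\ref{cockade} with $p=6$: either $G$ has a $K_6$ minor, or $G$ is an $MP_1$-cockade, and it remains to check that no $MP_1$-cockade on $12$ vertices has minimum degree $\ge 6$. The relevant structural facts are that a $4$-connected maximal planar graph has no separating triangle, hence no $K_4$ subgraph, so in any clique sum built from a cone $H\ast K_1$ the glued $K_4$ must consist of the apex together with a facial triangle of $H$; and that a maximal planar graph on fewer than $12$ vertices has a vertex of degree $\le 4$. A short induction on the recursive construction, tracking the degrees of the ``private'' vertices of the end pieces, then shows that every $MP_1$-cockade on at most $12$ vertices has a vertex of degree at most $5$, a contradiction.

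The substantial case is $|E(G)|\in\{36,37\}$, where neither Theorem~\ref{Mader} nor Theorem~\ref{cockade} applies. Here the main lever is J{\o}rgensen's theorem that every graph of order at most $11$ with minimum degree $\ge 6$ is contractible to $K_6$ \cite{Jo}: if $G$ had no $K_6$ minor then, for every vertex $v$, the graph $G-v$ could not have minimum degree $\ge 6$, so $v$ would have a neighbour of degree exactly $6$, and similarly, for every edge $e$, $G/e$ could not have minimum degree $\ge 6$. Counting against $|E(G)|\le 38$ forces at least eight vertices of degree exactly $6$, and these form a dominating set, so $G$ is already highly constrained. If $G$ has a cutset $S$ with $|S|\le 5$ we peel off a component: since any $K_6$-model has six branch sets and $6>|S|$, some branch set avoids $S$, and a clique-sum argument (making $S$ a clique, whose added edges are realized through another component of $G-S$) reduces the existence of a $K_6$ minor of $G$ to that of an auxiliary graph on at most $11$ vertices of minimum degree $\ge 5$; Theorem~\ref{deg5} applies to that graph, an icosahedral minor is impossible on at most $11$ vertices, and one is left with a $K_6^-$-minor situation on fewer vertices that is pushed down to small, directly verifiable configurations. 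If $G$ is $6$-connected we instead apply Theorem~\ref{deg5} to $G$ itself: either $G$ has an icosahedral minor --- but since $|V(G)|=12$ this forces the icosahedron to be a spanning subgraph, so $G$ is the icosahedron together with at least six further edges, which is dense enough to settle directly --- or $G$ has a $K_6^-$ minor $\{B_1,\dots,B_6\}$ with, say, no edge between $B_5$ and $B_6$, and one upgrades it to a $K_6$ minor: choosing the model with $\sum_i|B_i|$ minimal and using $\delta(G)\ge 6$ together with the $6$-connectivity, one either absorbs a vertex outside the model into $B_5$ or $B_6$, or reroutes locally, so as to create the missing adjacency.

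The main obstacle is this last step --- upgrading a $K_6^-$ minor to a $K_6$ minor in the $6$-connected, nearly $6$-regular regime. There the six branch sets are almost all singletons or pairs, the slack in both edges and degrees is minimal, and ruling out the configurations in which $B_5$ and $B_6$ genuinely resist being joined requires a careful and somewhat lengthy analysis of where the few higher-degree vertices and the few extra edges lie. By comparison the $|E(G)|=38$ cockade subcase and the icosahedron-plus-edges subcase are routine once they are set up.
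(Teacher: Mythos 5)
Your proposal agrees with the paper only in the outer stratification $|E(G)|\in\{36,37,38\}$ via Theorem~\ref{Mader} and in invoking Theorem~\ref{cockade} for $|E(G)|=38$ (where your route --- showing directly that no $MP_1$-cockade of order $12$ has minimum degree $6$ --- is viable, though the induction ``tracking private vertices'' is only sketched; the paper instead handles the cone case with Theorem~\ref{deg5} and the clique-sum case by an explicit edge count on $Q_1,S,Q_2$). The genuine gap is in the cases $|E(G)|\in\{36,37\}$, which is where essentially all the work of the theorem lies, and your outline defers exactly the steps that carry the difficulty. First, in the small-cutset branch, the claim that one can ``make $S$ a clique, whose added edges are realized through another component of $G-S$'' is not justified: contracting the other component onto $S$ yields only a single new vertex adjacent to some of $S$, and realizing all $\binom{|S|}{2}$ edges simultaneously requires internally disjoint paths through that component whose existence you never establish; moreover the resulting auxiliary graph need not have minimum degree $5$ at the vertices of $S$, so the appeal to Theorem~\ref{deg5} is not secured, and the final reduction to ``small, directly verifiable configurations'' is not carried out. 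Second, and more seriously, in the $6$-connected branch the passage from a $K_6^-$ minor to a $K_6$ minor (``absorb a vertex outside the model \ldots or reroute locally'') is precisely the hard combinatorial content of the theorem; Theorem~\ref{deg5} guarantees only $K_6^-$ or the icosahedron, and nothing in your outline shows that a minimal $K_6^-$ model can always be completed --- you yourself flag this as ``the main obstacle'' and leave it unproven. As it stands the proposal is an outline with the central argument missing.

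By contrast, the paper avoids any $K_6^-$-completion step: it fixes a vertex $v_1$ of degree $6$, splits $G$ into $N$ (the neighbourhood), $H$ (the remaining five vertices) and the edge set $L$ between them, and uses Handshaking counts to force either $H\simeq K_5$ or $K_5^-$ (settled by explicit contractions), or a low-degree vertex of $N$ whose contraction with $v_1$ produces an $11$- or $10$-vertex minor of prescribed size; such minors are then handled by Lemma~\ref{11cockade} and Lemma~\ref{10cockade}, which are themselves consequences of Theorem~\ref{cockade}, together with J{\o}rgensen's classification of $6$-regular graphs whose vertex links are $3$-regular. If you want to salvage your approach, you would need to supply a complete analysis of the $K_6^-$-upgrade in the $6$-connected, nearly $6$-regular regime (and a correct treatment of the cutset reduction), which is likely to be at least as long as the paper's case analysis.
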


Note that the theorem implies J{\o}rgensen's conjecture is vacuously true for graphs of order $12$.

\section{Main Theorem}

For a graph $G$, $V(G)$ denotes its vertex set and $E(G)$ denotes its edge set. The size of $V(G)$ is called \textit{the order} of $G$, and the cardinality of $E(G)$  is called \textit{the size} of $G$. For $n\ge 1$, $K_n$ denotes the complete graph of order $n$ and $K_n^-$ denotes the complete graph of order $n$ with one edge removed. If $v_1, v_2, \ldots,v_k$ are vertices of $G$, then $\big<v_1, v_2, \ldots,v_k\big>_G$ denotes the subgraph of $G$ induced on these vertices. If $v$ is a vertex of $G$, $N(v)$ is the subgraph of $G$ induced by $v$ and the vertices adjacent to $v$ in $G$. If $S$ is a subset of $V(G)$, $G-S$ is the subgraph of $G$ obtained by deleting all of the vertices in $S$ and all the edges of $G$ that $S$ is incident to.\\

The following lemma is a corollary of Theorem \ref{Mader}.

\begin{lemma} Let $G$ be a simple graph of order $n$ and size $4n-10$. If $G-v$ is planar, then $v$ cones over $G-v$.
\label{cone}
\end{lemma}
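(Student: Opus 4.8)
The plan is a short edge count. Since $G-v$ is planar with $n-1$ vertices, Euler's formula bounds its size by $|E(G-v)|\le 3(n-1)-6=3n-9$. (This uses $n-1\ge 3$; that is harmless, since the lemma is only ever invoked, via Theorem~\ref{Mader}, for $n\ge t-1=5$, so $n-1\ge 4$.)

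Next I would record the identity $\deg_G(v)=|E(G)|-|E(G-v)|$, valid because deleting $v$ removes precisely the $\deg_G(v)$ edges incident to it and no others. Substituting $|E(G)|=4n-10$ and the planar bound above yields
\[
\deg_G(v)\ \ge\ (4n-10)-(3n-9)\ =\ n-1 .
\]
On the other hand $G$ is simple of order $n$, so $\deg_G(v)\le n-1$. Hence $\deg_G(v)=n-1$, meaning $v$ is adjacent to every other vertex of $G$; equivalently $G=(G-v)\ast v$, i.e.\ $v$ cones over $G-v$, which is the assertion.

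There is no real obstacle: the content is just the arithmetic coincidence $4n-10=(3n-9)+(n-1)$, i.e.\ that $4n-10$ is exactly the number of edges of a cone over a maximal planar graph on $n-1$ vertices. Because $G-v$ is planar it can contribute at most $3n-9$ of the $4n-10$ edges, leaving no slack and forcing the remaining $n-1$ edges to be a complete star at $v$. The only things to be slightly careful about are the small-$n$ caveat for the planar bound mentioned above and the bookkeeping that $G$ being simple gives both the lower bound $|E(G)|=4n-10$ as a hypothesis and the upper bound $\deg_G(v)\le n-1$.
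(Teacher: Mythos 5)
Your proof is correct and follows the same argument as the paper: bound $|E(G-v)|$ by $3(n-1)-6$ via planarity, then observe the remaining $4n-10-(3n-9)=n-1$ edges must all be incident to $v$, forcing $v$ to cone over $G-v$. Your extra remarks about the small-$n$ caveat and the simplicity bound $\deg_G(v)\le n-1$ are harmless refinements of the paper's one-line computation.
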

\begin{proof} Since $G-v$ is planar of order $n-1$, it has at most $3(n-1)-6=3n-9$ edges. This implies that $v$ has at least $4n-10-(3n-9)=n-1$ neighbors, and the conclusion follows.
\end{proof}
Let $G$ denote a simple graph of order 12 and minimal degree $\delta(G)$ at least six. It follows that $G$ has at least size 36. By Theorem \ref{Mader}, if the size of $G$ is at least 39, then $G$ contains a $K_6$ minor. We shall prove Theorem \ref{main} by considering the size of $G$, $36 \le |E(G)|\le 38$. 

\begin{proof}
{\bf Case 1.} Assume $|E(G)|=38$. By Theorem \ref{cockade}, either $G$ contains a $K_6$ minor, or $G$ is apex, or $G$ is the clique sum over $K_4$ of two $MP_1$ cockades.

If $G$ is isomorphic to $H\ast K_1$, where $H$ is a maximal planar graph on 11 vertices, then $\delta(H)\ge 5$ and, by Theorem \ref{deg5}, it follows that $H$ has a $K_6^-$ minor and thus $G$ has a $K_6$ minor.

Assume that $G$ is the clique sum over $S\simeq K_4$ of two $MP_1$-cockades. If $G-S$ has more than two connected components, $Q_1,Q_2,...$, then at least one of them, say $Q_1$, has at most two vertices. But this contradicts the fact that $\delta(G)\ge 6$. So $G-S= Q_1\sqcup Q_2$. Furthermore, unless $|Q_1|=|Q_2|=4$, the graph either contains a $K_7$ subgraph (if $|Q_1|$=3), or $\delta(G)<6$ (if $1\le |Q_1|\le 2$). 
Since $\delta(G)\ge 6$, it follows that each vertex of $Q_i$ connects to at least two other vertices of $Q_i$, for $i=1,2$ respectively. 

Without loss of generality, let $Q_1=\big<v_1,v_2,v_3,v_4\big>_G$, $S=\big<v_5,v_6,v_7,v_8\big>_G\simeq K_4$, and $Q_2=\big<v_9,v_{10},v_{11},v_{12}\big>_G$. If $Q_i$ is not isomorphic to $K_4$ for any of $i=1,2$, say $v_1v_2\notin E(Q_1)$, since $v_1$ and $v_2$ must both connect to both $v_3$ and $v_4$, then contracting the edges $v_1v_3$ and $v_2v_4$ produces a minor of $G$ which contains a $K_6$ subgraph induced by $v_1$, $v_2$, and the four vertices of $S$. If, on the other hand, $Q_1 \simeq Q_2\simeq K_4$, as $\delta(G)\ge 6$, it follows that there are at least $12$ edges between each of the $Q_i$s and $S$. That would imply that $E(|G)|\ge 6+12+6+12+6=42$, a contradiction. It follows that for $|E(G)|=38$, $G$ has a $K_6$ minor.  \\

{\bf Case 2.} Assume that $|E(G)|=37$. Since $\delta(G)\ge 6$, it follows that the degree sequence of $G$ is either $(6,6,6,6,6,6,6,6,6,6,6,8)$ or $(6,6,6,6,6,6,6,6,6,6,7,7)$. In either of the situations, we shall need the following lemma:

\begin{lemma} Let $M$ denote a graph of order 11 and size 34, such that $\delta(M)\ge 5$. Assume that $M$  is not apex and has at most four vertices of degree 5. Then $M$ contains a $K_6$ minor.
\label{11cockade}
\end{lemma}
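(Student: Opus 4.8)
The plan is to assume $M$ has no $K_6$ minor and derive a contradiction with the remaining hypotheses. Since $|E(M)| = 34 = 4\cdot 11 - 10$, the graph $M$ attains Mader's bound (Theorem \ref{Mader} with $t=6$), so by Theorem \ref{cockade} (with $p=6$) $M$ is an $MP_1$-cockade. If $M$ is a cone over a $4$-connected maximal planar graph, then deleting the cone vertex leaves a planar graph, so $M$ is apex, contrary to hypothesis. Hence $M$ is the clique sum of two smaller $MP_1$-cockades $G_1$ and $G_2$ over a subgraph $S\simeq K_4$, with $|V(G_1)|=a\le b=|V(G_2)|$ and $a+b=15$.

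First I would pin down $G_1$. Each of the $a-4$ vertices of $V(G_1)\setminus S$ has all its neighbours inside $G_1$, hence degree at most $a-1$; since $\delta(M)\ge 5$ this forces $a\ge 6$. If $a=6$, then $G_1$ is not a cone (there is no $4$-connected maximal planar graph on five vertices), so $G_1$ is a clique sum over $K_4$ of two copies of $K_5$, i.e. $G_1\simeq K_6^-$; its two degree-$4$ vertices are non-adjacent, so one of them lies in $V(G_1)\setminus S$ with degree $4$ in $M$ — impossible. Hence $a=7$, $b=8$. The $MP_1$-cockades of order $7$ are $K_{1,2,2,2}$ (the cone over the octahedron $K_{2,2,2}$) together with the two graphs obtained as a clique sum over $K_4$ of $K_5$ with $K_6^-$ (one isomorphism type per choice of $K_4$ in $K_6^-$), namely $K_{1,1,1,1,3}$ (the join of a $K_4$ with three independent vertices) and $K_7$ with the three edges of a four-vertex path deleted. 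In each of these last two, the degree-$4$ vertices contain a pair, respectively a triple, of pairwise non-adjacent vertices, of which at most one can lie in the clique $S$; so some vertex of $V(G_1)\setminus S$ would again have degree $4$ in $M$. Therefore $G_1\simeq K_{1,2,2,2}$.

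Now write $K_{1,2,2,2}=K_{2,2,2}\ast s_0$ with cone vertex $s_0$ and the octahedron on the other six vertices. Then $A:=V(G_1)\setminus S$ is one triangular face of this octahedron, $S\setminus\{s_0\}=\{s_1,s_2,s_3\}$ is the opposite face, and every vertex of $A$ has degree exactly $5$ in $M$. Since $V(M)=A\cup V(G_2)$ with $A$ disjoint from $V(G_2)$, and $M$ has at most four vertices of degree $5$, at most one vertex of $V(G_2)$ has degree $5$ in $M$. It then remains to examine the $MP_1$-cockades $G_2$ of order $8$: these are the cones $H_2\ast w$ over a $4$-connected maximal planar graph $H_2$ of order $7$; the clique sums over $K_4$ of $K_5$ with one of the three order-$7$ cockades above; and the clique sum over $K_4$ of two copies of $K_6^-$. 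For each clique-sum type, the same kind of degree/packing analysis as for $G_1$ shows that either the low-degree vertices of $G_2$ cannot be placed inside the clique $S$ (so the configuration does not occur) or $M$ acquires a fifth vertex of degree $5$ — a contradiction in either case. If $G_2=H_2\ast w$, then $H_2$, being $4$-connected maximal planar on more than four vertices, contains no $K_4$, so $w\in S$; when $w=s_0$ the graph $M-s_0$ is the union of the octahedron $K_{2,2,2}$ and the triangulation $H_2$ glued along their common facial triangle $\{s_1,s_2,s_3\}$, hence planar, so $M$ is apex — contradiction; and when $w\ne s_0$ a count of the degree sum in $H_2$ (which has $7$ vertices, $15$ edges and minimum degree at least $4$) again produces a fifth vertex of degree $5$ in $M$.

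The main obstacle is this last step — controlling the order-$8$ piece $G_2$. The overall strategy is clean and the reductions "$M$ is an $MP_1$-cockade" and "$G_1\simeq K_{1,2,2,2}$" are quick, but enumerating the $MP_1$-cockades of order $8$ and carrying out the case-by-case bookkeeping (identifying which vertices are forced to have degree $5$, checking when the low-degree vertices fail to fit in a $K_4$, and producing an explicit apex vertex in the one surviving family) is where essentially all of the work lies.
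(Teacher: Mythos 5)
Your proposal is correct in substance, but it follows a genuinely different route from the paper's after the common first step. Both arguments begin the same way: by Theorem \ref{cockade} a putative counterexample is an $MP_1$-cockade, the cone case is ruled out because $M$ is not apex, so $M$ is a clique sum of two cockades over $S\simeq K_4$. From there the paper never identifies the two pieces: it works with the two components $Q_1,Q_2$ of $M-S$ (necessarily of orders 3 and 4 once the smaller components are dispatched by $\delta(M)\ge 5$ or an explicit $K_6$), uses Theorem \ref{Mader} to force $Q_1\simeq K_3$ with exactly nine edges to $S$, so all three vertices of $Q_1$ have degree 5, and then a two-line count on the other side ($|L|+|E(Q_2)|=16$ versus $|L|+2|E(Q_2)|\ge 23$, using the budget of at most four degree-5 vertices) gives $|E(Q_2)|\ge 7$, impossible on four vertices. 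You instead recursively unpack the cockade structure, pinning down $G_1\simeq K_{1,2,2,2}$ (order 7) and reducing to an order-8 cockade $G_2$, which you then eliminate case by case: the packing argument that pairwise non-adjacent degree-4 vertices cannot all sit in the clique $S$, the degree-5 budget, and, in the cone-over-a-triangulation case with cone vertex $s_0$, exhibiting $M$ as apex by gluing the octahedron and $H_2$ along the facial triangle $\{s_1,s_2,s_3\}$. Your one-sentence compression of the order-8 analysis is the thin spot, but the cases do close as you indicate: $K_6^-\cup_{K_4}K_6^-$ and $K_5\cup_{K_4}(K_7$ minus a three-edge path$)$ always leave a degree-4 vertex outside $S$; $K_5\cup_{K_4}K_{1,2,2,2}$ leaves three degree-5 vertices outside $S$, exceeding the budget; and in the cone case with $w\ne s_0$ the degree sum $30$ of $H_2$ with minimum degree 4 leaves total excess 2 over seven vertices, so at least two of the four vertices of $V(G_2)\setminus S$ have degree 5 in $M$, again breaking the budget. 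The trade-off: the paper's counting argument is much shorter and needs no catalogue of small cockades, while your route, once written out in full, gives more structural information (it effectively classifies the order-11 cockades meeting the degree hypotheses and shows each is apex or violates the degree-5 budget), at the cost of an enumeration whose details still have to be carried out explicitly.
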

\begin{proof}

By Theorem \ref{cockade}, either $M$ contains a $K_6$ minor or is a $MP_1$-cockade. Since $M$ is not apex, it follows that $M$ is the clique sum over $S\simeq K_4$ of two $MP_1$-cockades. If $M-S$ has more than two connected components, $Q_1,Q_2,...$, then at least one of them, say $Q_1$, has at most two vertices. As $|Q_1|=1$ would violate the condition $\delta(M)\ge 5$, it follows that $|Q_1|=2$ and the subgraph of $M$ induced by $Q_1$ and $S$ forms a $K_6$. So $M-S= Q_1\sqcup Q_2$ and, without loss of generality, $Q_1=\big<v_1,v_2,v_3\big>_M$, $S=\big<v_4,v_5,v_6,v_7\big>_M\simeq K_4$, and $Q_2=\big<v_8,v_{9},v_{10},v_{11}\big>_M$.
Unless $Q_1 \simeq K_3$, since $\delta(M)\ge 5$, it follows that at least two vertices of $Q_1$ connect to all the vertices of $S$ and thus, via an edge contraction, they induce a $K_6$ minor of $M$. 

If $Q_1\simeq K_3$, there have to be exactly 9 edges connecting the vertices of $Q_1$ to those of  $S$. If there are more than nine, the subgraph induced by the vertices of $Q_1$ and $S$ has 7 vertices and more than $3+9+6=18$ edges, thus it contains a $K_6$ minor by Theorem \ref{Mader}. If there are less than 9, then at least one of the vertices of $Q_1$ has degree less than 5. So all the vertices of $Q_1$ have degree 5, and the subgraph induced by the vertices of $Q_1$ and $S$ has exactly 18 edges. If $L$ denotes the set of edges connecting the vertices of $Q_2$ to the vertices of $S$, then $|L|+|E(Q_2)|=34-18=16$. On the other hand, since $Q_2$ can have at most one vertex of degree 5 in $M$, it follows that $|L|+2|E(Q_2)|\ge 6+6+6+5=23$. Subtracting the last two equalities we get $|E(Q_2)|\ge 7$, a contradiction as $Q_2$ has 4 vertices.
\end{proof}

Assume the vertex degree sequence for $G$ is $(6,6,6,6,6,6,6,6,6,6,6,8)$. Furthermore, without loss of generality, we may assume that $deg_G(v_1)=6$, $deg_G(v_8)=8$, and that $N_G(v_1)=\{v_1,...,v_7\}$. Let $N=\big<v_2,v_3,...,v_7\big>_G$, $H=\big<v_8,...,v_{12}\big>_G$, and let $L$ denote the set of edges of $G$ with one endpoint in $N$ and the other in $H$. The Handshaking Lemma provides the following relations between the sizes of $E(N), L,$ and $E(H)$:
\[2|E(N)|+|L|=30,\,\,\,\,2|E(H)|+|L|=32.\] 

If $|E(H)|=10$, that is $H\simeq K_5$, as every vertex of $H$ must have at least two neighbors in $N$ ($\delta(G)\ge 6$), contracting the edges $v_1v_i$, for $2\le i \le 7$, produces a $K_6$ minor of $G$.

If $|E(H)|\le 9$, then $|L|\ge 14$ and thus $|E(N)|\le 8$. It follows that there is a vertex of $N$, say $v_2$, such that $deg_N(v_2)\le 2$. If $deg_N(v_2)<2$, contracting the edge $v_1v_2$ would produce a minor of $G$ of order 11 and size at least $35$, which would contain a $K_6$ minor by Theorem \ref{Mader}. 

If $deg_N(v_2)=2$, then contracting the edge $v_1v_2$ would produce a minor $M$ of $G$ of order 11 and size precisely $34$. Furthermore, since $v_2$ neighbors exactly 3 vertices of $H$, the maximum degree of $M$ is 8, so it cannot be apex, according to Lemma \ref{cone}. Lastly,  $M$ has at most two vertices of degree 5, since $deg_N(v_2)=2$. By Lemma \ref{11cockade}, $M$ has a $K_6$ minor, and therefore so does $G$.\\

Assume the vertex degree sequence for $G$ is $(6,6,6,6,6,6,6,6,6,6,7,7)$. If the degree 7 vertices are connected in $G$, deleting the edge connecting them would produce a 6 regular subgraph of order 36, to be dealt with in the last case of the proof. So, without loss of generality, assume that $deg_G(v_1)=6$, $deg_G(v_8)=7$, and $N_G(v_1)=\{v_1,...,v_7\}$. Let $N=\big<v_2,v_3,...,v_7\big>_G$, $H=\big<v_8,...,v_{12}\big>_G$, and let $L$ denote the set of edges of $G$ with one endpoint in $N$ and the other in $H$. If $deg(v_i)=7$, for some $9\le i \le 12$, the same argument as before shows that $|E(N)|\le 8$ and thus $G$ contains a $K_6$ minor. So we may assume $deg_G(v_7)=7$ and $v_7v_8\notin E(G).$ Using the Handshaking Lemma, we get:
\[2|E(N)|+|L|=31,\,\,\,\,2|E(H)|+|L|=31.\]

If $|E(H)|=10$, contracting the edges $v_1v_i$, for $2\le i \le 7$, produces a $K_6$ minor of $G$.

If $|E(H)|\le 9$, then $|L|\ge 13$ and thus $|E(N)|\le 9$. If $|E(N)|\le 8$, it follows that there is a vertex of $N$, $v_i$, such that $deg_N(v_i)\le 2$.  If $deg_N(v_i)<2$, contracting the edge $v_1v_i$ would produce a minor of $G$ of order 11 and size at least $35$, which would contain a $K_6$ minor by Theorem \ref{Mader}. 

Assume $deg_N(v_i)=2$. Contracting the the edge $v_1v_i$ produces a minor $M$ of $G$ of order $11$ and size 34. Moreover, since for $2\le j \le 7$,  $v_j$ neighbors at most for of the vertices of $H$, $M$ cannot be apex. Lastly,  $M$ has at most two vertices of degree 5, since $deg_N(v_i)=2$. By Lemma \ref{11cockade}, $M$ has a $K_6$ minor, and therefore so does $G$.

It follows that $N$ is 3-regular, $L=13$ and that $|E(H)|=9$; that is, $H\simeq K_5^-$. If the missing edge of $H$ has $v_8$ as its endpoint, and since $deg_G(v_8)=7$,  it follows that $v_8$ neighbors at least 4 vertices of $N$. As the other endpoint, say $v_9$, neighbors at least 3 vertices of $N$, it follows that there exists $2\le i \le 6$ such that $v_i$ is a common neighbor of $v_8$ and $v_9$. Contracting the edges $v_iv_8$ and $v_1v_j$, for $2\le j \le 7, j\ne i$, one obtains a $K_6$ minor of G, as every vertex of $H$ neighbors at least one of the $v_j$s. 

If the missing edge of $H$ is $v_9v_{10}$, as $N$ is 3-regular, $deg_G(v_7)=7$ and $v_7$ does not neighbor $v_8$, it follows that $v_7$ neighbors $v_j$, for all $9\le j\le 12$. Since every vertex of $H$ neighbors at least two vertices of $N$, if follows that contracting the edges $v_7v_9$ and $v_1v_i$, for $2\le i \le 6$, produces a $K_6$ minor of $G$.\\
 
{\bf Case 3.} Assume $|G|=36$, that is $G$ is a 6-regular graph. Let $v_2,v_3,...,v_7$ be the neighbors of some vertex $v_1$ in $G$ and let $N=\big<v_2,v_3,...,v_7\big>_G$ denote the subgraph induced by them. Let $H=\big<v_8,...,v_{12}\big>_G$ and let $L$ denote the subset of $E(G)$ of edges having one endpoint in $N$ and the other in $H$. Then, as before, since the degree of every vertex in $G$ is 6, we have:
\[2|E(H)|+|L|=30\]
\[2|E(N)|+|L|=30,\] thus $|E(N)|=|E(H)|$. If $|E(H)|=10$, then $H \simeq K_5$. Since $\delta(G)= 6$ by hypothesis, each vertex in $H$ must be adjacent to a vertex in $N$. Contracting the edges $v_1v_i$, for $2\le i \le 7$, produces a $K_6$ minor of $G$. It follows that $\lvert E(N) \rvert \le 9$ and, unless $N$ is 3-regular, there exists at least a vertex of $N$ which has at most two neighbors in $N$. In \cite{Jo}, J{\o}rgensen proved that in a 6-regular graph, if the open neighborhood of every vertex of $G$ is 3-regular, then any connected component of the graph is isomorphic to either $K_{3,3,3}$ or the complement of the Petersen graph. Since both contain $K_6$ minors, it suffices to consider the case $deg_N(v_i)\le 2$, for some $2\le i \le 7$.\\

If, for some $2\le i \le 7$, $deg_N(v_i)=0$, then contracting the edge $v_1v_i$ produces a minor of $G$ of order 11 and size 35. By Theorem \ref{Mader}, this minor has a $K_6$ minor.\\

If, for some $2\le i \le 7$, $deg_N(v_i)=1$, then contracting the edge $v_1v_i$ produces a minor $M$ of $G$ of order 11 and size 34. Furthermore, the degree sequence of this minor would be $(5,6,6,6,6,6,6,6,6,6,9)$, hence, by Lemma 4, $M$ cannot be apex. By Lemma \ref{11cockade}, $M$ would have a $K_6$ minor.\\

We may then assume that, for $2\le i \le 7$, $deg_N(v_i) \ge 2$ and, w.l.o.g., the neighbors of $v_2$ in $N$ are $v_3$ and $v_4$. \\

{\bf Subcase 3.1} Assume that $v_3v_4\in E(G)$. Contracting the edge $v_1v_2$ produces a minor $M$ of $G$ of order 11 and size $33$. Furthermore, the degree sequence of this minor is $(5,5,6,6,6,6,6,6,6,6,8)$. Via a relabelling, we may assume that $deg_M(v_1)=deg_M(v_2)=5$, and that $deg_M(v_6)=8$. As before, let $N=\big<v_2,v_3,v_4,v_5,v_6\big>_M$ be the subgraph of $M$ induced by all the neighbors of $v_1$, $H=\big<v_7,...,v_{11}\big>_M$, and $L$ the subset of edges of $M$ with one endpoint in $N$ and the other in $H$. Adding degrees we get:
 
\[2|E(H)|+|L|=30\]
\[2|E(N)|+|L|=26,\]

thus $|E(H)|=|E(N)|+2$. Note that $|E(H)| \le 8$, since if $|E(H)|=10$, contracting $v_1v_i$ for $2\le i \le 6$, produces a $K_6$ minor of $M$; if $|E(H)|=9$, that is $H\simeq K_5^-$, then, w.l.o.g., assume $v_7v_8\notin E(H)$. Since $deg_M(v_7)=deg_M(v_8)=6$, it follows that both $v_7$ and $v_8$ have each three neighbors among the five vertices of $N$, thus they share a common neighbor in $N$, say $v_i$. Contracting the edges $v_iv_7$ and $v_1v_j$, for $2\le j\ne i \le 6$, we obtain a $K_6$ minor of $M$. \\

We may then assume that $|E(H)|\le 8$, and thus $|E(N)|\le 6$. If any vertex $v_i$ of $N$ has at most one neighbor in $N$, contracting the edge $v_1v_i$ would produce a minor of $M$ of order 10 and size at least 31. By Theorem \ref{Mader}, this would contain a $K_6$ minor. So every vertex of $N$ has at least two neighbors in $N$. If follows that $5\le |E(N)|\le 6$ and, by \cite{atlas}, $N$ is isomorphic to one of the four graphs in Figure \ref{ABCD}.

\begin{figure}[htpb!]
\begin{center}
\begin{picture}(350, 80)
\put(0,0){\includegraphics[width=5in]{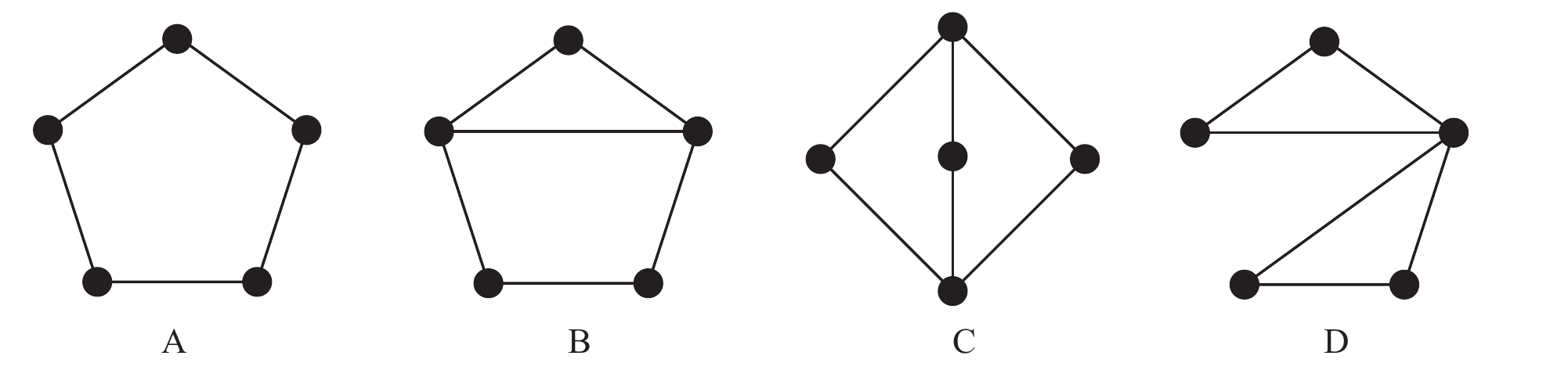}}
\end{picture}
\caption{Graphs of order 5 and size at most 6, with minimum degree 2.}
\label{ABCD}
\end{center}
\end{figure} 

If $deg_N(v_2)=2$, then contracting the edge $v_1v_2$ produces an order 10 and size 30 minor of $M$, with degree sequence $(5,6,6,6,6,6,6,6,6,7)$. The following lemma shows that $M$ contains a $K_6$ minor.

\begin{lemma}
Let $M$ denote a graph of order 10 and size 30, such that $\delta(M)\ge 5$. Assume that $M$  is not apex and has at most five vertices of degree 5. Then $M$ contains a $K_6$ minor.
\label{10cockade} 
\end{lemma}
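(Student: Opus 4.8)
The plan is to follow the template of the proof of Lemma~\ref{11cockade}. Since $M$ has order $10$ and size $30 = 4\cdot 10 - 10$, Theorem~\ref{cockade} applies with $p = 6$: either $M$ already contains a $K_6$ minor and we are done, or $M$ is an $MP_1$-cockade. Assume the latter. As $|V(M)| = 10 > 5$ we have $M\not\simeq K_5$, and $M$ cannot be a cone $H\ast K_1$ over a maximal planar graph $H$, for then $M - v$ would be planar for the cone vertex $v$, contradicting the hypothesis that $M$ is not apex. Hence $M$ is the clique sum over a set $S$ with $\langle S\rangle_M\simeq K_4$ of two $MP_1$-cockades $G_1, G_2$, with $V(G_1)\cap V(G_2) = S$, $V(G_1)\cup V(G_2) = V(M)$, and $E(M) = E(G_1)\cup E(G_2)$. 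In particular $|V(G_1)| + |V(G_2)| = 14$ and $|E(G_1)| + |E(G_2)| = |E(M)| + |E(S)| = 36$, while $|V(M)\setminus S| = 6$; and since a vertex of $V(G_i)\setminus S$ does not belong to the other cockade, all of its $M$-neighbours lie in $V(G_i)$.

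Next I would study the components of $M - S = (G_1 - S)\sqcup(G_2 - S)$; neither part is empty, since an $MP_1$-cockade has at least $5 > 4$ vertices. Suppose $M - S$ has at least three components, and let $Q$ be a smallest one, so $|Q|\le 2$. A one-vertex component contradicts $\delta(M)\ge 5$, since its vertex has at most $|S| = 4$ neighbours. If $|Q| = 2$, say $Q = \{u, v\}$, then $uv\in E(M)$ and, as $Q$ is a component, $N_M(u)$ and $N_M(v)$ are contained in $\{u, v\}\cup S$; then $\deg_M(u), \deg_M(v)\ge 5$ force $u$ and $v$ to be adjacent to each other and to all four vertices of $S$, so $\{u, v\}\cup S$ induces a $K_6$ in $M$. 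Hence I may assume $M - S$ has exactly two components, $Q_1 = V(G_1)\setminus S$ and $Q_2 = V(G_2)\setminus S$, each connected, with $|Q_1| + |Q_2| = 6$; by symmetry take $|Q_1|\le|Q_2|$, so $|Q_1|\in\{1, 2, 3\}$. The subcases $|Q_1| = 1$ and $|Q_1| = 2$ are finished by exactly the two arguments above applied to $Q_1$ in place of $Q$.

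The remaining, and only substantial, case is $|Q_1| = |Q_2| = 3$, so each $G_i$ has seven vertices. Let $L_i$ be the set of edges of $M$ with one end in $Q_i$ and one in $S$, so that $|E(G_i)| = |E(\langle Q_i\rangle_M)| + |L_i| + 6$. If $|E(G_i)|\ge 19 = 4\cdot 7 - 9$ for some $i$, then $G_i\subseteq M$ has a $K_6$ minor by Theorem~\ref{Mader}; so assume $|E(G_1)|, |E(G_2)|\le 18$, whence, since they sum to $36$, both equal $18$ and $|E(\langle Q_i\rangle_M)| + |L_i| = 12$ for $i = 1, 2$. Because each vertex of $Q_i$ has all its $M$-neighbours in $Q_i\cup S$ and $\delta(M)\ge 5$, we also have $2|E(\langle Q_i\rangle_M)| + |L_i| = \sum_{v\in Q_i}\deg_M(v)\ge 15$; subtracting the previous equation gives $|E(\langle Q_i\rangle_M)|\ge 3$, so $|E(\langle Q_i\rangle_M)| = 3$, $|L_i| = 9$, and then $\sum_{v\in Q_i}\deg_M(v) = 15$, so every vertex of $Q_1$ and of $Q_2$ has degree exactly $5$ in $M$. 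This produces $|Q_1| + |Q_2| = 6$ vertices of degree $5$, contradicting the assumption that $M$ has at most five such vertices; this contradiction finishes the proof.

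I expect no conceptual obstacle here — the argument is bookkeeping — but two points need care. First, $V(G_i)\setminus S$ need not be connected, which is precisely why the case of three or more components of $M - S$ must be disposed of before the sets $Q_1, Q_2$ are introduced. Second, the crux is that in the case $|Q_1| = 3$ the relations $|E(\langle Q_i\rangle_M)| + |L_i| = 12$ and $2|E(\langle Q_i\rangle_M)| + |L_i|\ge 15$ are tight enough to pin all six vertices of $Q_1\cup Q_2$ to degree $5$, which is exactly what the hypothesis on the number of degree-$5$ vertices rules out; the clique sum of two copies of $K_{1,2,2,2}$ over a $K_4$ is an $MP_1$-cockade meeting every hypothesis except this one, so the bound of five is sharp.
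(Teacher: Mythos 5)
Your proof is correct and follows essentially the same route as the paper: Theorem \ref{cockade} gives the clique-sum structure over $S\simeq K_4$, small components of $M-S$ yield a $K_6$ with $S$ directly, and the remaining case of two components of size $3$ is settled by edge/degree counting combined with Theorem \ref{Mader}. The only (cosmetic) difference is at the end: the paper applies the degree-$5$ hypothesis to one component to show the subgraph induced by $Q_1\cup S$ has at least $19$ edges, whereas you count edges globally and derive the contradiction that all six vertices outside $S$ would have degree $5$.
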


\begin{proof}
	By Theorem \ref{cockade}, either $M$ contains a $K_6$ minor or is a $MP_1$-cockade. Since $M$ is not apex, it follows that $M$ is the clique sum over $S\simeq K_4$ of two $MP_1$-cockades. Since $\delta(M)\ge5$, any connected component of $M-S$ is at least size two. If any of the connected components of $M-S$ has exactly two vertices, then that component together with $S$ induce a $K_6$ subgraph of $M$. The only situation left to discuss is when $M-S$ has exactly two size 3 connected components, $Q_1$ and $Q_2$. At least one of them, say $Q_1$, contains at most 2 vertices of degree 5. If we denote by $L'$ the set of edges connecting the vertices of $Q_1$ to the vertices of $S$, then 
\[2|E(Q_1)|+|L'|\ge 6+5+5=16,\]
 hence
 \[|E(Q_1)|+|L'|\ge 13 \Rightarrow |E(Q_1)|+|L'|+|E(S)|\ge 19,\] thus $Q_1$ and $S$ induce a subgraph of $M$ of order 7 and size 19. By Theorem \ref{Mader}, this subgraph contains a $K_6$ minor.
 \end{proof}
 
 If $deg_N(v_2)=3$, then $N$ is isomorphic to either graph $B$ or graph $C$ in Figure \ref{ABCD}. Furthermore, $v_2$ neighbors in $N$ a vertex (say $v_3$) of total degree 6 which has degree 2 in $N$ and does not neighbor the vertex of degree 8. Contracting the edge $v_1v_3$ produces a minor $P$ of $M$ of order 10 and size 30. Furthermore, the degree sequences of this minor is $(4,5,6,6,6,6,6,6,7,8)$ and thus it is not apex. By Theorem \ref{cockade}, $P$ either contains a $K_6$ minor or it is the clique sum over $S\simeq K_4$ of two $MP_1$-cockades. Every vertex of $S$ has degree at least 5 since it must connect to every connected component of $P - S$. Let $Q_1$ denote the connected component of $P\backslash S$ which contains the vertex of degree 4. Let $H$ denote the graph induced by the the vertices of $P-(S\cup Q_1)$ and let $L''$ denote the set of edges of $P$ with one endpoint in $S$ and the other in $H$. If $|E(Q_1)|=1$, then 
\[|L''|+|E(H)|=20\]
\[|L''|+2|E(H)|\ge 6+6+6+6+5=29.\] It follows that $|E(H)|\ge9$, that is $H\simeq K_5^-$ or $H\simeq K_5$ and a sequence of contractions in $P$ will create a $K_6$ minor. 
If $|E(Q_1|=2$, then $\big<Q_1\cup S\big>_P\simeq K_6^-$ and
\[|L''|+|E(H)|=16\]
\[|L''|+2|E(H)|\ge 6+6+6+6=24.\]  It follows that $|E(H)|\ge 8$, which is a contradiction ($H$ has only 4 vertices). It follows that $|E(Q)|=3$ and that $\big<S\cup H\big>_P$ contains a $K_7^-$ minor.\\

If $deg_N(v_2)=4$, then $N\simeq D$ of Figure \ref{ABCD} and contracting the edge connecting $v_1$ to the common neighbor of $v_2$ and $v_6$ in $N$ produces a minor $P$ of $M$ of order 10 and size 30, with degree sequence $(4,6,6,6,6,6,6,6,7,7)$, where the two degree 7 vertices and the degree 4 vertex form a triangle. Theorem \ref{cockade} shows that $P$ is a clique sum over $K_4$ of two $MP_1$-cockades. Furthermore $P-K_4$ has exactly two connected components, $Q_1$ and $Q_2$. As any vertex that's part of the clique has at least degree 5 in $P$, we may assume that the vertex of degree 4 is a vertex of $Q_1$. Unless $|Q_1|=1$, both $Q_1$ and $Q_2$ will contain vertices of degree at least six in $P$, hence $|Q_1|=|Q_2|=3$. But this implies that contracting any edge incident to the vertex of degree 4 in $Q_1$ produces a $K_6$ minor of the graph induced by $Q_1\sqcup K_4$. 

If $|Q_1|=1$, then let $L''$ denote the set of edges in $P$ with one endpoint in $K_4$ and the other in $Q_2$. It follows that
\[|L'|=7+7+6+6-12-4=10\]
\[|E(Q_2)|=10,\] hence $Q_2\simeq K_5$ and thus contracting the subgraph induced by $Q_1\sqcup K_4$ to a point produces a $K_6$ minor.\\

{\bf Subcase 3.2} Assume that $v_3v_4\notin E(G)$. Contracting the edge $v_1v_2$ produces a minor $M$ of $G$ of order 11 and size $33$. Furthermore, the degree sequence of this minor is $(5,5,6,6,6,6,6,6,6,6,8)$. Via a relabelling, we may assume that $deg_M(v_1)=deg_M(v_7)=5$, and that $deg_M(v_6)=8$. As before, let $N=\big<v_2,v_3,v_4,v_5,v_6\big>_M$ be the subgraph of $M$ induced by all the neighbors of $v_1$, $H=\big<v_7,...,v_{11}\big>_M$, and $L$ the subset of edges of $M$ with one endpoint in $N$ and the other in $H$. Adding degrees we get:
\[2|E(H)|+|L|=29\]
\[2|E(N)|+|L|=27,\]
thus $|E(H)|=|E(N)|+1$. Furthermore, if $|E(H)|=10$, that is $H\simeq K_5$, contracting $v_1v_i$ for $2\le i \le 6$ produces a $K_6$ minor. 

Assume $|E(H)|=9$ and $|E(N)|=8$. Then by \cite{atlas}, $N$ is isomorphic to one of the two graphs in Figure \ref{AB}.

\begin{figure}[htpb!]
\begin{center}
\begin{picture}(250, 100)
\put(0,0){\includegraphics[width=3in]{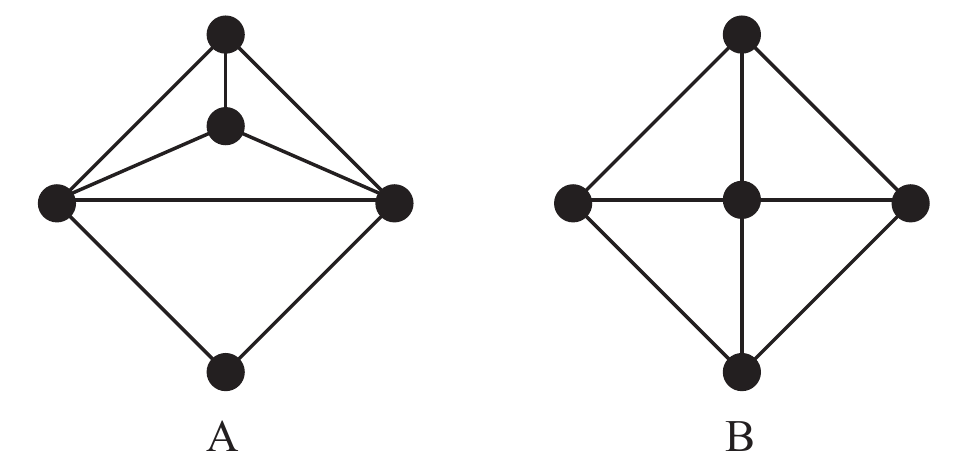}}
\end{picture}
\caption{Graphs of order 5 and size 8, with minimum degree 2.}
\label{AB}
\end{center}
\end{figure} 

If $N\simeq A$ in Figure \ref{AB}, as all vertices of $N$ have minimum degree 6, contracting the vertices of $H$ (which is connected) to a single point and then further contracting the edge joining the newly obtained point and the only vertex of degree 2 in $N$ produces a $K_6$ minor. 

If $N\simeq B$ in Figure \ref{AB}, then we distinguish four cases based on the position of $v_6$ and $v_7$ inside $N$ and $H$, respectively.

\begin{figure}[htpb!]
\begin{center}
\begin{picture}(250, 100)
\put(0,0){\includegraphics[width=3.2in]{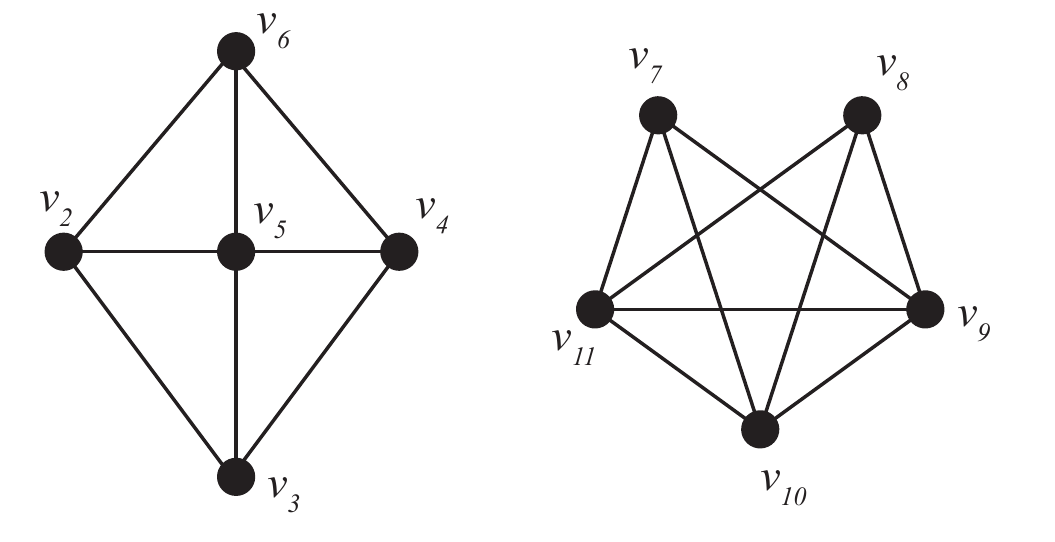}}
\end{picture}
\caption{Case 1: $deg_N(v_6)=3$ and $deg_H(v_7)=3$.}
\label{case1}
\end{center}
\end{figure} 

{\bf Case 1} Assume $deg_N(v_6)=3$ and $deg_H(v_7)=3$. W.l.o.g., assume $deg_H(v_8)=3$, that is $v_7v_8$ is the only edge missing in the complete graph on the vertices of $H$. If $v_8$ neighbors $v_6$, contracting the edge $v_6v_8$ and all the edges of $\big<v_1,v_2,v_3,v_4,v_5\big>_M$ produces a $K_6$ minor of $M$. If $v_6$ does not neighbor $v_8$, then contracting $\big<v_1,v_2,v_3,v_4,v_5,v_8\big>_M$ to a point produces a $K_6$ minor of $M$.

\begin{figure}[htpb!]
\begin{center}
\begin{picture}(250, 110)
\put(0,0){\includegraphics[width=3.2in]{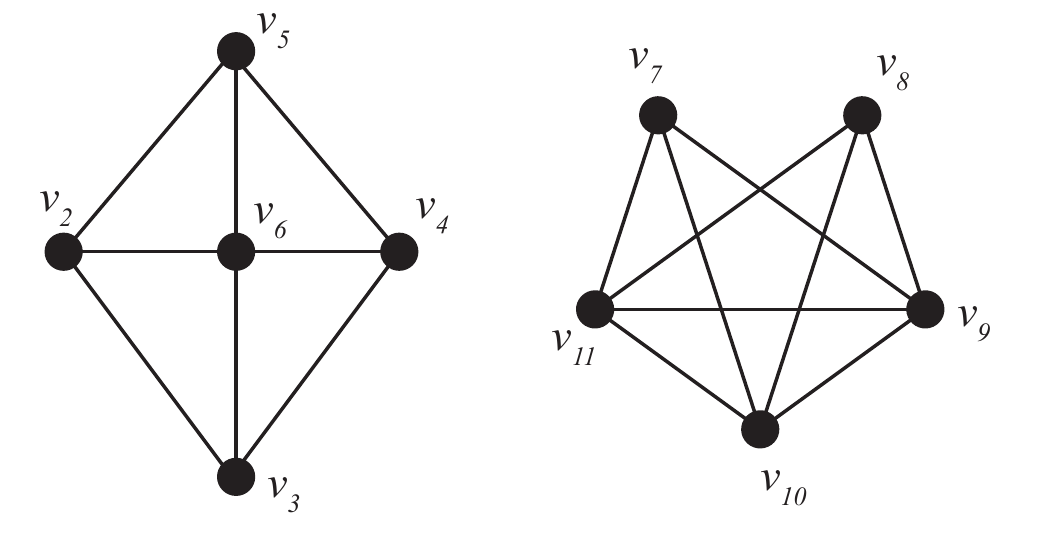}}
\end{picture}
\caption{Case 2: $deg_N(v_6)=4$ and $deg_H(v_7)=3$.}
\label{case2}
\end{center}
\end{figure} 

{\bf Case 2} Assume $deg_N(v_6)=4$ and $deg_H(v_7)=3$. W.l.o.g., we may assume $deg_H(v_8)=3$. If $v_7$ and $v_8$ share a neighbor in $N$, say $v_j$, then contracting $v_jv_7$ and then all the edges of $\big<v_1,N- v_j\big>_M$, we obtain a $K_6$ minor of $G$. So, as a set, $\{v_7, v_8\}$ neighbors all the vertices of $N$. W.l.o.g., $v_5v_7, v_6v_7,v_5v_9 \in E(M)$, $v_3v_5 \notin E(N)$. If $v_3$ neighbors either of $v_{10}$ or $v_{11}$, say $v_{10}$, then contracting the edges $v_3v_{10}, v_5v_9$ will connect $v_3$ and $v_5$, contracting all the edges of $\big<v_2,v_7,v_8,v_{11}\big>_M$ will connect $v_2$ and $v_4$, and thus we obtain a $K_6$ minor. But then, $v_3$ must neighbor $v_9$, and contracting the edge $v_3v_9$ and all the edges of $\big<v_7,v_8,v_{10},v_{11},v_2\big>_M$ produces a $K_6$ minor.

\begin{figure}[htpb!]
\begin{center}
\begin{picture}(250, 110)
\put(0,0){\includegraphics[width=3.2in]{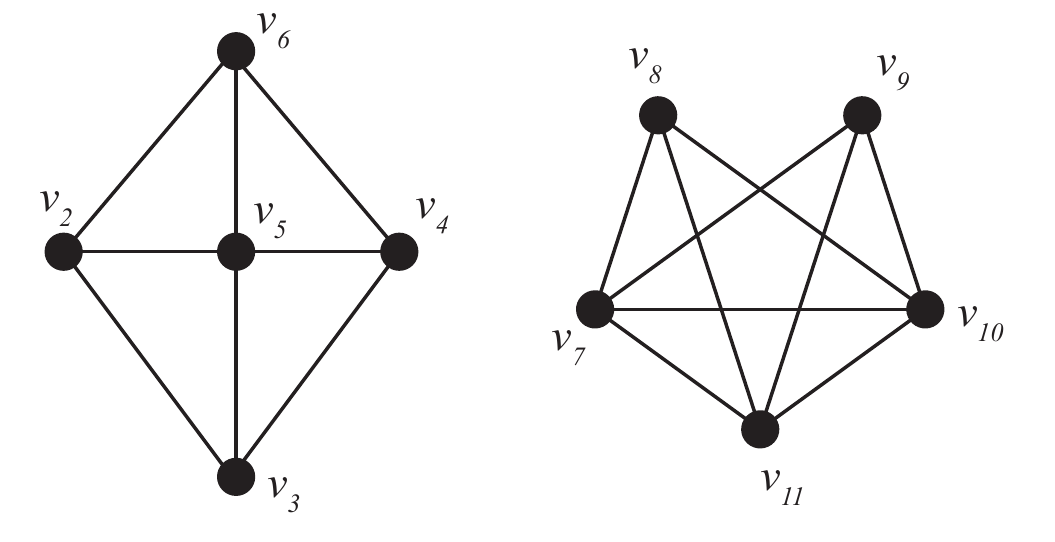}}
\end{picture}
\caption{Case 3: $deg_N(v_6)=3$ and $deg_H(v_7)=4$.}
\label{case3}
\end{center}
\end{figure} 

{\bf Case 3} Assume $deg_N(v_6)=3$ and $deg_H(v_7)=4$. W.l.o.g., we may assume $deg_H(v_8)=deg_H(v_9)=3$, and that $deg_N(v_5)=4$. Since $v_8$ and $v_9$ each connect to three vertices of $N$, they have a common vertex in $N$. If this vertex is not $v_6$, say $v_i$, then contacting the edges $v_iv_8$ and $v_1v_j$ for $2\le j \ne i \le 6$, we obtain a $K_6$ minor of $M$. So $\{v_7, v_8, v_9\}$, as a set, neighbors all the vertices of $N$. Since $v_3$ does not neighbor $v_7$ and cannot neighbor both $v_8$ and $v_9$, it must neighbor one of $v_{10}$ or $v_{11}$. But then, contracting $v_6, v_{10}$ and $v_{11}$ to a point and then contracting $v_7, v_8, v_9,v_2$ to another point, we obtain a $K_6$ minor of M.\\

\begin{figure}[htpb!]
\begin{center}
\begin{picture}(250, 110)
\put(0,0){\includegraphics[width=3.2in]{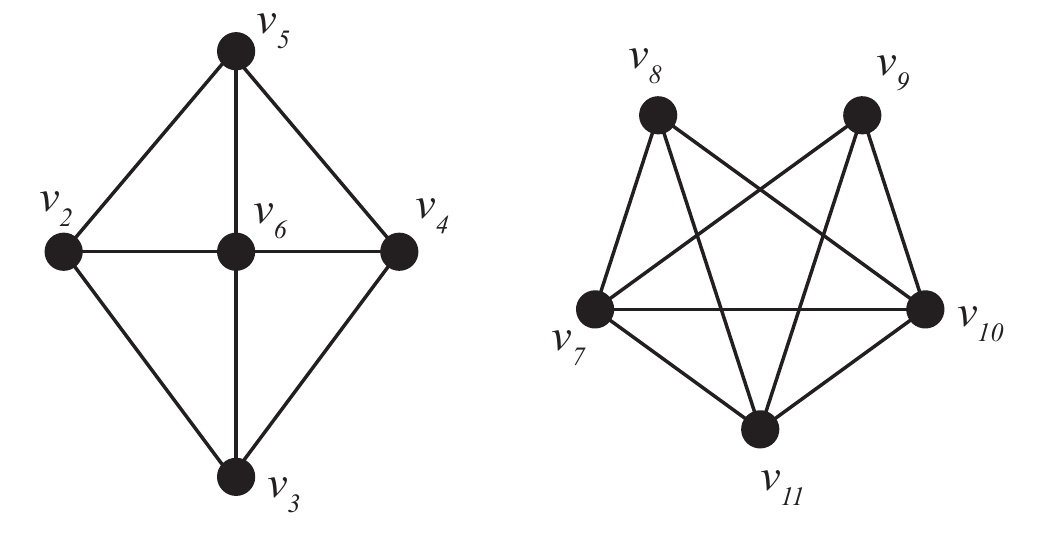}}
\end{picture}
\caption{Case 4: $deg_N(v_6)=4$ and $deg_H(v_7)=4$.}
\label{case4}
\end{center}
\end{figure}

{\bf Case 4} Assume $deg_N(v_6)=4$ and $deg_H(v_7)=4$. W.l.o.g., we may assume that $deg_H(v_8)=deg_H(v_9)=3$. Since both $v_8$ and $v_9$ connect to three vertices of $N$, they must share at least one common neighbor in $N$. If that common neighbor is not $v_6$, say $v_2$, contracting the edges $v_2v_8$, and $v_1v_i$ for $3\le i \le 6$, we obtain a $K_6$ minor of $M$. It follows that $v_6$ connects to $v_7,v_8$ and $v_9$ and that, as a set $\{v_8,v_9\}$ neighbor all the vertices of $N$. If $\{v_{10}v_{11}\}$ neighbors, as a set, any non neighbors in $N$, say $v_2$ and $v_4$, then contracting the edges $v_{10}v_{11}$,$v_2v_{10}$, and all the edges of $\big<v_5,v_7,v_8,v_9\big>_M$, we obtain a $K_6$ minor of $M$. If not, as $v_6$ neighbors neither $v_{10}$ nor $v_{11}$, $v_{10}$ and $v_{11}$ must both neighbor an edge of $N$ not incident to $v_6$, say $v_2v_3$. But since $v_2$ neighbors $v_1,v_3,v_5,v_6,v_{10}$ and $v_{11}$ and one of $v_8$ or $v_9$, it follows that $v_2$ has degree 7 in $M$, a contradiction.\\

It follows that $|E(N)|\le 7$. On the other hand, if any of $v_2,...,v_6$ have degree at most 1 in $N$, contracting the edge connecting that vertex to $v_1$ would produce an order 10 size 31 minor of $M$, which would have a $K_6$ minor by Theorem \ref{Mader}. This shows that $\delta(N)\ge 2$ and that $|E(N)|\ge 5$. Furthermore, if any of the degree 6 neighbors of $v_1$ have degree 2 in $N$, contracting the edge connecting that neighbor to $v_1$ would produce a non-apex graph of order 10 and size 30, with minimal degree at least 5, and at most three vertices of degree 5. By Lemma \ref{10cockade}, this graph would contain a $K_6$ minor. This observation handles the cases $|E(N)|=5$ and $|E(N)|=6$, since any graph on 5 vertices with minimum degree 2 and size at most 6 has at least two vertices of degree exactly 2.

Assume that $|E(N)|=7$, $mindeg(N)=2$ and that $N$ has only one vertex of degree 2. Then $N$ is isomorphic to the graph in Figure \ref{EN7}. Furthermore, $deg_N(v_6)$ is 2, thus $v_6$ connects to all the vertices of $H$. If $v_7$ connects to two or more vertices of $N$, then contracting $v_1v_i$ for $2\le i\le 5$ and $H$ to one of its $K_4$ minors ($H$ has 8 edges and 5 vertices, by Theorem \ref{Mader} it has a $K_4$ minor) we obtain a $K_6$ minor. It follows that $v_7$ connects to $v_8,v_9,v_{10}$ and $v_{11}$. Furthermore, the open neighborhood of $v_7$ contains exactly 8 edges. By symmetry between $v_1$ and $v_7$ and Subcase 3.2, $|E(N)|=8$, it follows that $M$ has a $K_6$ minor. 

\begin{figure}[htpb!]
\begin{center}
\begin{picture}(120, 110)
\put(0,0){\includegraphics[width=1.5in]{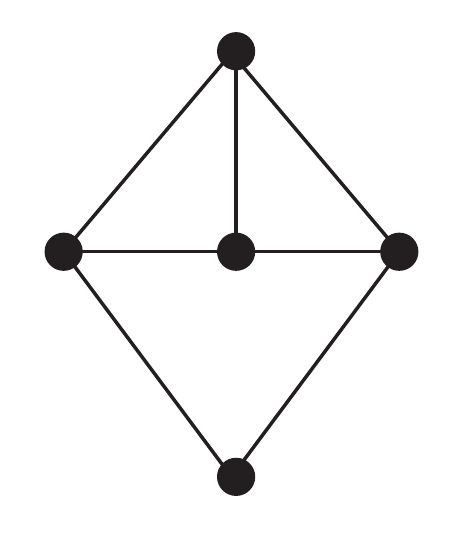}}
\end{picture}
\caption{The unique graph of order 5, size 7, and min degree 2, with exactly one vertex of degree 2.}
\label{EN7}
\end{center}
\end{figure} 

\end{proof}

\section{Future Explorations}

\begin{enumerate}

\item Is it true that any simple graph of order at most 14 and minimum degree at least 6, which is not apex, contains a $K_6$-minor? Note that in the proof of Theorem \ref{main}, we used weaker versions of Lemma \ref{11cockade} and Lemma \ref{10cockade}. Similar lemmas hold for graphs of order 13 and 12, respectively. They provide a first step in generalizing Theorem \ref{main} for graphs of order at most 14. 

\item The result of this paper shows that, for graphs of order 12, weaker assumptions are needed for the conclusion of J{\o}rgensen's conjecture to be true. What is the minimum $n>12$ for which the minimum degree 6 condition is no longer sufficient and the 6-connected condition is needed?  Such $n$ would have to be at most 26, as the following example demonstrates.\\

Let $G_1\simeq G_2 \simeq K_1\ast Ic$, where $Ic$ denotes the icosahedral graph (5-regular, maximal planar, order 12). Let $G$ be the graph obtained by joining $G_1$ and $G_2$ with a single edge. Then $\delta(G)=6$, $G$ is not apex and it has no $K_6$ minor.

\end{enumerate}

\end{document}